\newtheorem{theorem}{Theorem}[section]
\newtheorem{proposition}[theorem]{Proposition}
\newtheorem{proof}[theorem]{Proof}
\newtheorem{corollary}[theorem]{Corollary}
\newtheorem{definition}[theorem]{Definition}
\newtheorem{remark}[theorem]{Remark}
\begin{document}

\title{\textbf{ON} \textbf{KNOTTED SPHERES IN EUCLIDEAN 4-SPACE }$\mathbb{E}%
^{4}$}
\author{Kadri Arslan }
\maketitle

\begin{abstract}
In the present study we consider knotted spheres in Euclidean $4$-space $%
\mathbb{E}^{4}$. Firstly, we give some basic curvature properties of knotted
spheres in $\mathbb{E}^{4}$. \ Further, we obtained some results related
with the conjugate nets and Laplace transforms of these kind of surfaces.
\end{abstract}

\section{\textbf{Introduction}}

\footnote{%
2010 AMS \textit{Mathematics Subject Classification}. 53C40, 53C42
\par
\textit{Key words and phrases}: Rotational surface, Knotted sphere,
Conjugate nets}Let us consider a unit speed regular curve $\gamma $ in $%
\mathbb{E}^{4}$ and a unit speed spherical curve $\rho $ in $\mathbb{E}^{2}.$
Then, the rotation of $\gamma $ around $\rho $ give rise a surface $M$ in $%
\mathbb{E}^{4}$, which is called \textit{rotational surface}. The rotational
surfaces in $\mathbb{E}^{4}$ was first introduced by C. Moore in $1919$
(see, \cite{Mo}). Further, many researchers concentrated these studies on
this subject, see for example \cite{ABCO}, \cite{BABO1}, \cite{DT} and \cite%
{GM1}. The rotational surfaces in $\mathbb{E}^{4}$ with constant curvatures
are studied in \cite{Wo}.

Let us denote the half-space $x_{3}\geq 0,x_{4}=0$ by $\mathbb{E}_{+}^{3}(0)$
and take an arc $\alpha $ with the end point in the plane $x_{3}=0,x_{4}=0$ $%
($denoted by $\Pi ).$ The rotation the half space plane $\mathbb{E}%
_{+}^{3}(0)$ by the angle $v$ around the plane $\Pi $ is denoted by $\mathbb{%
E}_{+}^{3}(v).$ Consequently, after the rotation the point with coordinates $%
x_{1},x_{2},x_{3},x_{4}$ passes into the point with the coordinates $%
\widetilde{x}_{1},\widetilde{x}_{2},\widetilde{x}_{3},\widetilde{x}_{4}$ by%
\begin{eqnarray}
\widetilde{x}_{1} &=&x_{1}  \notag \\
\widetilde{x}_{2} &=&x_{2}  \label{z1} \\
\widetilde{x}_{3} &=&x_{3}\cos v-x_{4}\sin v  \notag \\
\widetilde{x}_{3} &=&x_{3}\sin v-x_{4}\cos v.  \notag
\end{eqnarray}

In rotation by $360^{\circ }$ the points of $\alpha $ being in $\mathbb{E}%
_{+}^{3}(v),$ form the set $M$ homeomorphic to $S^{2}$ \cite{Ar}$.$ Let $%
\alpha $ be a smooth curve with tangent vectors at $p$ and\ $q$ orthogonal
to $\Pi .$ Then $M$ is a smooth surface which is called \textit{knotted
sphere} in $\mathbb{E}^{4}$ \cite{Am}.

A net of curves on a surface $M$ is called \textit{conjugate}, if at every
point the tangent directions of the curves of the net separate harmonically
the asymptotic directions \cite{LG}. Consequently, for a surface $M$ with a
patch $X(u,v)$, a net of curves on $M$ are conjugate if and only if the
second partial derivative $X_{uv}$ of $X$ lies in the subspace spanned by $%
X_{u}$ and $X_{v}$ \cite{KT}.

This paper is organized as follows: In section $2$ we give some basic
concepts of the second fundamental form and curvatures of the surfaces in $%
\mathbb{E}^{4}$. In Section $3$ we consider knotted spheres in $\mathbb{E}%
^{4}$. Firstly, we give some basic curvature properties of knotted spheres
in $\mathbb{E}^{4}$. \ Further, we introduce some kind of knotted spheres
and obtained some results related with their curvatures. In section $4$ we
give some basic curvature properties of the conjugate nets on a surface in $%
E^{n}.$ Further, we calculated the Laplace invariants and the Laplace
transforms of the knotted sphere in $\mathbb{E}^{4}.$

\section{\textbf{Preliminaries}}

Let $M$ be a local surface in $\mathbb{E}^{n}$ given with position vector $%
X(u,v)$. The tangent space $T_{p}M$ is spanned by the vector fields $X_{u}$
and $X_{v}$. In the chart $(u,v)$ the coefficients of the first fundamental
form of $M$ are given by 
\begin{equation}
E=\langle X_{u},X_{u}\rangle ,F=\left \langle X_{u},X_{v}\right \rangle
,G=\left \langle X_{v},X_{v}\right \rangle ,  \label{a1}
\end{equation}%
where $\left \langle ,\right \rangle $ is the inner product in $\mathbb{E}%
^{n}$. We assume that $X(u,v)$ is regular i.e., $W^{2}=EG-F^{2}\neq 0$ \cite%
{G}.

Consequently, the \textit{Gaussian curvature} of $M$ is given by%
\begin{equation}
K=-\frac{1}{4W^{2}}\left \vert 
\begin{array}{ccc}
E & E_{u} & E_{v} \\ 
F & F_{u} & F_{v} \\ 
G & G_{u} & G_{v}%
\end{array}%
\right \vert -\frac{1}{2W}\left( \left( \frac{E_{v}-F_{u}}{W}\right)
_{v}-\left( \frac{F_{v}-G_{u}}{W}\right) _{u}\right) .  \label{a2}
\end{equation}

Let $\overset{\sim }{\nabla }$ be the Riemannian connection of $\mathbb{E}%
^{4},$ and $X_{1}=X_{u},$ $X_{2}=X_{v}$ tangent vector fields of $M$ then 
\textit{Gauss equation} gives%
\begin{equation}
\widetilde{\nabla }_{X_{i}}X_{j}=\sum_{k=1}^{2}\Gamma _{ij}^{k}X_{k}+h\left(
X_{i},X_{j}\right) ;\text{ }1\leq i,j\leq 2,  \label{a3}
\end{equation}%
where $h$ is the second fundamental form and $\Gamma _{ij}^{k}$ are the 
\textit{Christoffel symbols} of $M$.

The \textit{mean curvature vector} $\overrightarrow{H}$ of of $M$ is given
by 
\begin{equation}
\overrightarrow{H}=\frac{1}{2W^{2}}\left(
Eh(X_{v},X_{v})-2Fh(X_{u},X_{v})+Gh(X_{u},X_{u})\right) .  \label{a4}
\end{equation}

The norm of the mean curvature vector $\overrightarrow{H}$ is known as 
\textit{mean curvature} of $M$. Recall that, a surface $M$ is said to be 
\textit{minimal} (resp. \textit{flat}) if its mean curvature (resp. Gaussian
curvature) vanishes identically \cite{Ch}.

\section{\textbf{Knotted Spheres in }$\mathbb{E}^{4}$}

Let $M$ be a knotted sphere\ given with (\ref{z1}), then the position vector
of $M$ can be represented as: 
\begin{equation}
X(u,v)=\left( x_{1}(u),x_{2}(u),x_{3}(u)\cos v-x_{4}(u)\sin v,x_{3}(u)\sin
v+x_{4}(u)\cos v\right) .  \label{b2}
\end{equation}%
where 
\begin{equation*}
\gamma (u)=\left( x_{1}(u),x_{2}(u),x_{3}(u),x_{4}(u)\right) ,
\end{equation*}%
is the profile curve of the surface \cite{Am}. Then, the tangent space $%
T_{p}M$ of $M$ is spanned by

\begin{eqnarray}
X_{u} &=&\left( x_{1}^{\prime }(u),x_{2}^{\prime }(u),x_{3}^{\prime }(u)\cos
v-x_{4}^{\prime }(u)\sin v,x_{3}^{\prime }(u)\sin v+x_{4}^{\prime }(u)\cos
v\right) ,  \notag \\
X_{v} &=&\left( 0,0,-x_{3}(u)\sin v-x_{4}(u)\cos v,x_{3}(u)\cos
v-x_{4}(u)\sin v\right) .  \label{b3}
\end{eqnarray}

Consequently, the coefficients of first fundamental form become%
\begin{eqnarray}
E &=&\left \langle X_{u},X_{u}\right \rangle =1,  \notag \\
F &=&\left \langle X_{u},X_{v}\right \rangle =x_{3}(u)x_{4}^{\prime
}(u)-x_{3}^{\prime }(u)x_{4}(u),\text{ }  \label{b4} \\
G &=&\left \langle X_{v},X_{v}\right \rangle =x_{3}^{2}(u)+x_{4}^{2}(u). 
\notag
\end{eqnarray}

The Christoffel symbols $\Gamma _{ij}^{k}$ of the canal surface $M$ are
given by%
\begin{equation}
\begin{array}{ccc}
\text{ \  \ }\Gamma _{11}^{1}=-\frac{FF_{u}}{W^{2}}, & \text{ \ }\Gamma
_{12}^{1}=-\frac{FG_{u}}{2W^{2}}, & \text{ \ }\Gamma _{22}^{1}=-\frac{GG_{u}%
}{2W^{2}} \\ 
\Gamma _{11}^{2}=\frac{F_{u}}{W^{2}}, & \Gamma _{12}^{2}=\frac{G_{u}}{2W^{2}}%
, & \text{\ }\Gamma _{22}^{2}=\frac{FG_{u}}{2W^{2}}.%
\end{array}
\label{b5}
\end{equation}%
which are symmetric with respect to the covariant indices $($\cite{G}, $%
p.398)$.

The second partial derivatives of \ $r$ are expressed as follows:%
\begin{eqnarray}
X_{uu} &=&\left( x_{1}^{\prime \prime }(u),x_{2}^{\prime \prime
}(u),x_{3}^{\prime \prime }(u)\cos v-x_{4}^{\prime \prime }(u)\sin
v,x_{3}^{\prime \prime }(u)\sin v+x_{4}^{\prime \prime }(u)\cos v\right) , 
\notag \\
X_{uv} &=&\left( 0,0,-x_{3}^{\prime }(u)\sin v-x_{4}^{\prime }(u)\cos
v,x_{3}^{\prime }(u)\cos v-x_{4}^{\prime }(u)\sin v\right) ,  \label{b6} \\
X_{vv} &=&\left( 0,0,-x_{3}(u)\cos v+x_{4}(u)\sin v,-x_{3}(u)\sin
v-x_{4}(u)\cos v\right) .  \notag
\end{eqnarray}

Using (\ref{b3}) with (\ref{b6}) we get 
\begin{eqnarray}
\left \langle X_{uu},X_{vv}\right \rangle &=&-\left( x_{3}(u)x_{3}^{\prime
\prime }(u)+x_{4}(u)x_{4}^{\prime \prime }(u)\right) ,  \notag \\
\left \langle X_{vv},X_{u}\right \rangle &=&-\left( x_{3}(u)x_{3}^{\prime
}(u)+x_{4}(u)x_{4}^{\prime }(u)\right) ,  \notag \\
\left \langle X_{uu},X_{v}\right \rangle &=&x_{3}(u)x_{4}^{\prime \prime
}(u)-x_{3}^{\prime \prime }(u)x_{4}(u),  \label{b7*} \\
\left \langle r_{s\theta },X_{uv}\right \rangle &=&(x_{3}^{\prime
}(u))^{2}+(x_{4}^{\prime }(u))^{2},  \notag \\
\left \langle X_{uu},X_{u}\right \rangle &=&0,  \notag \\
\left \langle X_{vv},X_{v}\right \rangle &=&0.  \notag
\end{eqnarray}

Hence, taking into account (\ref{a3}), the Gauss equation implies the
following equations; 
\begin{eqnarray}
\widetilde{\nabla }_{X_{u}}X_{u} &=&X_{uu}=\Gamma _{11}^{1}X_{u}+\Gamma
_{11}^{2}X_{v}+h(X_{u},X_{u}),  \label{b6*} \\
\widetilde{\nabla }_{X_{u}}X_{v} &=&X_{uv}=\Gamma _{12}^{1}X_{u}+\Gamma
_{12}^{2}X_{v}+h(X_{u},X_{v}),  \notag \\
\widetilde{\nabla }_{X_{v}}X_{v} &=&X_{vv}=\Gamma _{22}^{1}X_{u}+\Gamma
_{22}^{2}X_{v}+h(X_{v},X_{v}).  \notag
\end{eqnarray}

Taking in mind (\ref{a3}), (\ref{b5}) and (\ref{b6*}) we get%
\begin{eqnarray}
h(X_{u},X_{u}) &=&X_{uu}+\frac{FF_{u}}{W^{2}}X_{u}-\frac{F_{u}}{W^{2}}X_{v},
\notag \\
h(X_{u},X_{v}) &=&X_{uv}+\frac{FG_{u}}{2W^{2}}X_{u}-\frac{G_{u}}{2W^{2}}%
X_{v},  \label{b7} \\
h(X_{v},X_{v}) &=&X_{vv}+\frac{GG_{u}}{2W^{2}}X_{u}-\frac{FG_{u}}{2W^{2}}%
X_{v}.  \notag
\end{eqnarray}

Consequently, by the use of \ (\ref{a2}), (\ref{a4}), (\ref{b4}) and (\ref%
{b7*}) with (\ref{b7}) the Gaussian curvature and mean curvature vector of $%
M $ become%
\begin{equation}
K=-\frac{1}{2W}\left( \frac{G_{u}}{W}\right) _{u},  \label{b8}
\end{equation}%
and%
\begin{equation}
\overrightarrow{H}=\frac{1}{2W^{2}}\left(
Eh(X_{v},X_{v})-2Fh(X_{u},X_{v})+Gh(X_{u},X_{u})\right) ,  \label{b9}
\end{equation}%
respectively.

In the sequel, we consider some special cases;

\textbf{Case I. }Suppose 
\begin{equation}
x_{3}=\cos \varphi (u),\text{ }x_{4}=\sin \varphi (u),  \label{b10}
\end{equation}%
then the position vector of the knotted sphere $M$ can be represented as

\begin{equation}
X(u,v)=\left( x_{1},x_{2},\cos \varphi (u)\cos v-\sin \varphi (u)\sin v,\cos
\varphi (u)\sin v+\sin \varphi (u)\cos v\right) .  \label{b10*}
\end{equation}

Hence, the coefficients of the first fundamental form become%
\begin{eqnarray}
E &=&\left \langle X_{u},X_{u}\right \rangle =1,  \notag \\
F &=&\left \langle X_{u},X_{v}\right \rangle =\varphi ^{\prime }(u),\text{ }
\label{b11} \\
G &=&\left \langle X_{v},X_{v}\right \rangle =1,  \notag
\end{eqnarray}%
where $\varphi $ is a differentiable (angle) function.

Summing up (\ref{b5})-(\ref{b11}) the following results are proved;

\begin{proposition}
The surface $M$ given with the position vector (\ref{b10*}) is a flat
surface.
\end{proposition}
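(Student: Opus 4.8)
The plan is to establish flatness by showing that the Gaussian curvature $K$ vanishes identically. The key simplification, already provided by the general formula (\ref{b8}), is that for a knotted sphere the Gaussian curvature reduces to
\begin{equation*}
K=-\frac{1}{2W}\left( \frac{G_{u}}{W}\right) _{u},
\end{equation*}
so $K$ depends only on the coefficient $G$ (through $W^{2}=EG-F^{2}$). Therefore my entire task reduces to computing $W$ and $G_{u}$ for the specialized profile curve and checking that the combination above is zero.

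First I would record the first fundamental form in this case, which from (\ref{b11}) is $E=1$, $F=\varphi^{\prime}(u)$, and $G=1$. The critical observation is that $G$ is constant, so $G_{u}=0$ identically. Substituting directly into (\ref{b8}) gives
\begin{equation*}
K=-\frac{1}{2W}\left( \frac{0}{W}\right) _{u}=0,
\end{equation*}
and no further computation is needed. For completeness I would also compute $W^{2}=EG-F^{2}=1-(\varphi^{\prime}(u))^{2}$ and remark that the surface is regular precisely where $(\varphi^{\prime}(u))^{2}\neq 1$, so that $W$ is nonvanishing and the formula is well defined on the regular locus.

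There is essentially no obstacle here: the entire proof hinges on the single algebraic fact that the parametrization (\ref{b10}) forces $G=x_{3}^{2}+x_{4}^{2}=\cos^{2}\varphi+\sin^{2}\varphi=1$ to be constant, which kills the only surviving term in the reduced curvature formula (\ref{b8}). The one point worth stating carefully is the reduction of the general Gaussian curvature expression (\ref{a2}) to (\ref{b8}), but that reduction is already carried out earlier in the excerpt and may be cited. Thus the proof is a one-line substitution once the first fundamental form is in hand, and the only care required is to note the regularity condition under which $W\neq 0$.
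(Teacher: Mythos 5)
Your proof is correct and follows essentially the same route the paper intends: the paper derives the reduced curvature formula (\ref{b8}) and the first fundamental form (\ref{b11}), and flatness is immediate from $G\equiv 1$, hence $G_{u}=0$. Your added remark on the regularity condition $(\varphi^{\prime}(u))^{2}\neq 1$ is a sensible clarification but does not change the argument.
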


\begin{proposition}
Let $M$ be a surface given with the position vector (\ref{b10*}). Then, the
mean curvature $\left \Vert \overrightarrow{H}\right \Vert $ of $M$ at point 
$p $ is given by%
\begin{equation}
\left \Vert \overrightarrow{H}\right \Vert =\frac{1}{4\left( 1-\left(
\varphi ^{\prime }(u)\right) ^{2}\right) ^{2}}\left( \kappa _{\gamma
}^{2}+1-2\left( \varphi ^{\prime }(u)\right) ^{2}-\frac{\left( \varphi
^{\prime \prime }(u)\right) ^{2}}{1-\left( \varphi ^{\prime }(u)\right) ^{2}}%
\right) ,  \label{b12}
\end{equation}%
where $\kappa _{\gamma }$ is the curvature of the profile curve $\gamma .$
\end{proposition}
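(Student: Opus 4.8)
The plan is to reduce everything to explicit vectors via the angle-addition identities $\cos\varphi(u)\cos v-\sin\varphi(u)\sin v=\cos(\varphi(u)+v)$ and $\cos\varphi(u)\sin v+\sin\varphi(u)\cos v=\sin(\varphi(u)+v)$, so that the last two coordinates of $X$, $X_{u}$, $X_{uu}$, $X_{uv}$, $X_{vv}$ in (\ref{b3}), (\ref{b6}) all become trigonometric functions of $\varphi(u)+v$. By (\ref{b11}) we have $E=G=1$ and $F=\varphi^{\prime}(u)$, hence $W^{2}=1-(\varphi^{\prime})^{2}$ and, crucially, $G_{u}=0$. Substituting $G_{u}=0$ and $F_{u}=\varphi^{\prime\prime}$ into (\ref{b7}) collapses the second fundamental form to $h(X_{u},X_{v})=X_{uv}$, $h(X_{v},X_{v})=X_{vv}$, and $h(X_{u},X_{u})=X_{uu}+\frac{\varphi^{\prime}\varphi^{\prime\prime}}{W^{2}}X_{u}-\frac{\varphi^{\prime\prime}}{W^{2}}X_{v}$. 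So the first step is simply to write these three vectors out componentwise.

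The second step is to assemble $\overrightarrow{H}$ from (\ref{b9}) with $E=G=1$, $F=\varphi^{\prime}$, i.e. $2W^{2}\overrightarrow{H}=h(X_{v},X_{v})-2\varphi^{\prime}h(X_{u},X_{v})+h(X_{u},X_{u})$, computed coordinate by coordinate. The key simplification I expect is in the last two coordinates: the $\varphi^{\prime\prime}$-terms coming from $X_{uu}$, from $\frac{\varphi^{\prime}\varphi^{\prime\prime}}{W^{2}}X_{u}$, and from $-\frac{\varphi^{\prime\prime}}{W^{2}}X_{v}$ all combine with coefficient $1+\frac{(\varphi^{\prime})^{2}-1}{W^{2}}$, which is exactly $0$ because $W^{2}=1-(\varphi^{\prime})^{2}$. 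After this cancellation the third and fourth coordinates of $2W^{2}\overrightarrow{H}$ reduce to $-W^{2}\cos(\varphi+v)$ and $-W^{2}\sin(\varphi+v)$, while the first two coordinates are just $x_{i}^{\prime\prime}+\frac{\varphi^{\prime}\varphi^{\prime\prime}}{W^{2}}x_{i}^{\prime}$ for $i=1,2$.

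The third step is to take the squared length. The last two coordinates contribute $W^{4}$, so $\Vert 2W^{2}\overrightarrow{H}\Vert^{2}=\bigl(x_{1}^{\prime\prime}+\tfrac{\varphi^{\prime}\varphi^{\prime\prime}}{W^{2}}x_{1}^{\prime}\bigr)^{2}+\bigl(x_{2}^{\prime\prime}+\tfrac{\varphi^{\prime}\varphi^{\prime\prime}}{W^{2}}x_{2}^{\prime}\bigr)^{2}+W^{4}$. To remove the explicit $x_{i}$'s I would use that $\gamma$ is unit speed: from $(x_{1}^{\prime})^{2}+(x_{2}^{\prime})^{2}+(\varphi^{\prime})^{2}=1$ one gets $(x_{1}^{\prime})^{2}+(x_{2}^{\prime})^{2}=W^{2}$, and differentiating gives $x_{1}^{\prime}x_{1}^{\prime\prime}+x_{2}^{\prime}x_{2}^{\prime\prime}=-\varphi^{\prime}\varphi^{\prime\prime}$. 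Expanding the two squares and substituting these identities makes the cross term and the quadratic term collapse, leaving $(x_{1}^{\prime\prime})^{2}+(x_{2}^{\prime\prime})^{2}-\frac{(\varphi^{\prime})^{2}(\varphi^{\prime\prime})^{2}}{W^{2}}+W^{4}$.

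The final step introduces $\kappa_{\gamma}$: for the unit-speed profile curve, $\kappa_{\gamma}^{2}=\Vert\gamma^{\prime\prime}\Vert^{2}=(x_{1}^{\prime\prime})^{2}+(x_{2}^{\prime\prime})^{2}+(\varphi^{\prime\prime})^{2}+(\varphi^{\prime})^{4}$, so I substitute $(x_{1}^{\prime\prime})^{2}+(x_{2}^{\prime\prime})^{2}=\kappa_{\gamma}^{2}-(\varphi^{\prime\prime})^{2}-(\varphi^{\prime})^{4}$. Using $W^{4}=1-2(\varphi^{\prime})^{2}+(\varphi^{\prime})^{4}$ the $(\varphi^{\prime})^{4}$ terms cancel, and the two $(\varphi^{\prime\prime})^{2}$-terms combine via $(\varphi^{\prime})^{2}+W^{2}=1$ into $-\frac{(\varphi^{\prime\prime})^{2}}{W^{2}}$. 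This yields $\Vert 2W^{2}\overrightarrow{H}\Vert^{2}=\kappa_{\gamma}^{2}+1-2(\varphi^{\prime})^{2}-\frac{(\varphi^{\prime\prime})^{2}}{W^{2}}$, and since $\Vert 2W^{2}\overrightarrow{H}\Vert^{2}=4W^{4}\Vert\overrightarrow{H}\Vert^{2}$, dividing by $4W^{4}=4(1-(\varphi^{\prime})^{2})^{2}$ gives precisely the displayed right-hand side. The main obstacle here is not conceptual but purely bookkeeping: carrying the trigonometric terms correctly through step two so the two cancellations (the $\varphi^{\prime\prime}$-coefficient vanishing, and the cross-term collapse from the unit-speed identities) are seen cleanly, and correctly identifying the planar second-derivative norm with $\kappa_{\gamma}$. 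I note that the computation delivers this expression for $\Vert\overrightarrow{H}\Vert^{2}$; since the prefactor $4(1-(\varphi^{\prime})^{2})^{2}$ already matches (\ref{b12}), the intended left-hand side is $\Vert\overrightarrow{H}\Vert^{2}$.
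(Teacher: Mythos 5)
Your proposal is correct and follows essentially the same route as the paper's own proof: compute $h$ from (\ref{b7}) using $E=G=1$, $F=\varphi'$, assemble $2W^{2}\overrightarrow{H}$ componentwise (the paper's $\overline{x}_{3},\overline{x}_{4}$ are exactly your $-W^{2}\cos(\varphi+v)$, $-W^{2}\sin(\varphi+v)$ before the angle-addition simplification), and eliminate $x_{1},x_{2}$ via the unit-speed identities to bring in $\kappa_{\gamma}$. Your closing observation is also right: the computation (both yours and the paper's) yields the displayed right-hand side for $\Vert\overrightarrow{H}\Vert^{2}$, so the left-hand side of (\ref{b12}) should read $\Vert\overrightarrow{H}\Vert^{2}$ rather than $\Vert\overrightarrow{H}\Vert$.
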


\begin{proof}
With the help of (\ref{b7}), (\ref{b9}) and (\ref{b11}) the Gaussian
curvature vector of $M$ becomes%
\begin{equation}
2\overrightarrow{H}=\frac{1}{1-\left( \varphi ^{\prime }\right) ^{2}}\left( 
\overline{x}_{1},\overline{x}_{2},\overline{x}_{3}\cos v+\overline{x}%
_{4}\sin v,\overline{x}_{3}\sin v-\overline{x}_{4}\cos v\right) ,
\label{b13}
\end{equation}%
where,%
\begin{eqnarray}
\overline{x}_{1} &=&x_{1}^{\prime \prime }+\frac{\varphi ^{\prime }\varphi
^{\prime \prime }}{1-\left( \varphi ^{\prime }\right) ^{2}}x_{1}^{\prime }, 
\notag \\
\overline{x}_{2} &=&x_{1}^{\prime \prime }+\frac{\varphi ^{\prime }\varphi
^{\prime \prime }}{1-\left( \varphi ^{\prime }\right) ^{2}}x_{2}^{\prime }
\label{b14} \\
\overline{x}_{3} &=&\frac{\varphi ^{\prime }\varphi ^{\prime \prime
}x_{3}^{\prime }+\varphi ^{\prime \prime }x_{4}}{1-\left( \varphi ^{\prime
}\right) ^{2}}+x_{3}^{\prime \prime }-x_{3}+2\varphi ^{\prime }x_{4}^{\prime
}=-\left( 1-\left( \varphi ^{\prime }\right) ^{2}\right) \cos \varphi , 
\notag \\
\overline{x}_{4} &=&\frac{\varphi ^{\prime \prime }x_{3}-\varphi ^{\prime
}\varphi ^{\prime \prime }x_{4}^{\prime }}{1-\left( \varphi ^{\prime
}\right) ^{2}}-x_{4}^{\prime \prime }+x_{4}+2\varphi ^{\prime }x_{3}^{\prime
}=\left( 1-\left( \varphi ^{\prime }\right) ^{2}\right) \sin \varphi  \notag
\end{eqnarray}%
are differentiable functions. Taking the norm of the vector (\ref{b13}) and
using (\ref{b10}) with (\ref{b14}) we obtain (\ref{b12}). This completes the
proof of the proposition.
\end{proof}

As a consequence of Proposition $2$ we obtain the following result.

\begin{corollary}
Let $M$ be a surface given with the position vector (\ref{b10*}). Then $M$
is a minimal surface if and only if \ the curvature $\kappa _{\gamma }$ of
the profile curve $\gamma $ satisfies the equality 
\begin{equation}
\kappa _{\gamma }^{2}=\frac{\left( \varphi ^{\prime \prime }(u)\right) ^{2}}{%
1-\left( \varphi ^{\prime }(u)\right) ^{2}}+2\left( \varphi ^{\prime
}(u)\right) ^{2}-1,  \label{b15}
\end{equation}%
in such a way that the (angle) function $\varphi $ is non-constant.
\end{corollary}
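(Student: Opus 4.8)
The plan is to read Corollary 3 as a direct specialization of Proposition 2, so the proof is essentially an algebraic inversion rather than a fresh geometric argument. The starting point is the closed form for the mean curvature established in (\ref{b12}), namely
\begin{equation*}
\left\Vert \overrightarrow{H}\right\Vert =\frac{1}{4\left(1-\left(\varphi^{\prime}\right)^{2}\right)^{2}}\left(\kappa_{\gamma}^{2}+1-2\left(\varphi^{\prime}\right)^{2}-\frac{\left(\varphi^{\prime\prime}\right)^{2}}{1-\left(\varphi^{\prime}\right)^{2}}\right).
\end{equation*}
Since $M$ is minimal precisely when $\left\Vert \overrightarrow{H}\right\Vert$ vanishes identically, and since the prefactor $\tfrac{1}{4\left(1-\left(\varphi^{\prime}\right)^{2}\right)^{2}}$ is nonzero wherever the surface is regular (one has $W^{2}=EG-F^{2}=1-\left(\varphi^{\prime}\right)^{2}\neq 0$ by the regularity hypothesis, as read off from (\ref{b11})), the condition $\left\Vert \overrightarrow{H}\right\Vert=0$ is equivalent to the vanishing of the parenthesized factor alone.

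The core step, then, is to set
\begin{equation*}
\kappa_{\gamma}^{2}+1-2\left(\varphi^{\prime}\right)^{2}-\frac{\left(\varphi^{\prime\prime}\right)^{2}}{1-\left(\varphi^{\prime}\right)^{2}}=0
\end{equation*}
and solve for $\kappa_{\gamma}^{2}$, which immediately rearranges to the claimed identity (\ref{b15}),
\begin{equation*}
\kappa_{\gamma}^{2}=\frac{\left(\varphi^{\prime\prime}\right)^{2}}{1-\left(\varphi^{\prime}\right)^{2}}+2\left(\varphi^{\prime}\right)^{2}-1.
\end{equation*}
I would then note that the equivalence runs in both directions: the assumed regularity keeps the overall factor finite and nonzero, so (\ref{b15}) holds if and only if $\left\Vert \overrightarrow{H}\right\Vert=0$, i.e. if and only if $M$ is minimal. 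This makes the corollary a genuine iff-statement rather than a one-way implication.

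The only genuinely non-routine point is the role of the final clause, that $\varphi$ be non-constant. Here I would observe that if $\varphi$ were constant then $\varphi^{\prime}=\varphi^{\prime\prime}=0$, the right-hand side of (\ref{b15}) would collapse to $-1$, and $\kappa_{\gamma}^{2}=-1$ is impossible for a real curve; equivalently, a constant $\varphi$ forces $F=0$ in (\ref{b11}) and reduces (\ref{b10*}) to an ordinary rotational sphere whose mean curvature cannot vanish. Thus the non-constancy hypothesis is exactly what is needed to keep the right-hand side of (\ref{b15}) in the admissible range where a real profile curve $\gamma$ can realize it, and stating it rules out the degenerate vacuous case. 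Apart from confirming this consistency, the proof is a one-line manipulation of (\ref{b12}), so I anticipate no computational obstacle; the entire substance of the result is already carried by Proposition 2.
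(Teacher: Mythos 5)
Your proposal is correct and matches the paper's (implicit) argument: the paper offers no separate proof, presenting the corollary as an immediate consequence of Proposition 2 obtained by setting the expression in (\ref{b12}) equal to zero and solving for $\kappa_{\gamma}^{2}$, which is exactly what you do. Your additional remarks on the nonvanishing of the prefactor under the regularity hypothesis $W^{2}=1-\left(\varphi^{\prime}\right)^{2}\neq 0$ and on why the non-constancy of $\varphi$ is needed (otherwise the right-hand side of (\ref{b15}) equals $-1$, which no real $\kappa_{\gamma}^{2}$ can attain) are sound and make the equivalence explicit.
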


\textbf{Case II. }Suppose $x_{4}=\lambda x_{3}$, $\lambda \in \mathbb{R},$
then the position vector of the knotted sphere $M$ can be represented as 
\begin{equation}
r(s,\theta )=\left( x_{1}(u),x_{2}(u),x_{3}(u)\left( \cos v-\lambda \sin
v\right) ,x_{3}(u)\left( \sin v+\lambda \cos v\right) \right) .  \label{b16}
\end{equation}

Hence, the coefficients of the first fundamental form of $M$ become%
\begin{eqnarray}
E &=&\left \langle X_{u},X_{u}\right \rangle =1,  \notag \\
F &=&\left \langle X_{u},X_{v}\right \rangle =0,\text{ }  \label{b16*} \\
G &=&\left \langle X_{v},X_{v}\right \rangle =\left( 1+\lambda ^{2}\right)
x_{3}^{2}(u).  \notag
\end{eqnarray}

By the use of (\ref{b16*}) with (\ref{a2}) we obtain the following result.

\begin{proposition}
Let $M$ be a surface given with the position vector (\ref{b16}). Then, the
Gaussian curvature of $M$ is given by%
\begin{equation*}
K=-\frac{x_{3}^{\prime \prime }(u)}{x_{3}(u)}.
\end{equation*}
\end{proposition}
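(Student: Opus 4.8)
The plan is to read off the intrinsic Gaussian curvature directly from the Brioschi-type formula (\ref{a2}), which depends only on the first fundamental form; hence the coefficients (\ref{b16*}) are all that is needed. The decisive structural feature of Case II is that the net is \emph{orthogonal}, $F=0$, while $E\equiv 1$ and $G=(1+\lambda^{2})x_{3}^{2}(u)$ depend on $u$ alone. Consequently $E_{u}=E_{v}=F_{u}=F_{v}=G_{v}=0$, so that among all the partials entering (\ref{a2}) only $G_{u}$ survives.

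First I would dispose of the determinant term. Substituting the vanishing partials, the $3\times 3$ determinant in (\ref{a2}) has its entire $F$-row and its entire $v$-derivative column equal to zero, so it contributes nothing. In the remaining term one has $E_{v}-F_{u}=0$, an expression independent of $v$ whose $v$-derivative therefore drops out, while $F_{v}-G_{u}=-G_{u}$. Thus (\ref{a2}) collapses to
\[
K=-\frac{1}{2W}\left(\frac{G_{u}}{W}\right)_{u},
\]
which is precisely the reduced expression (\ref{b8}) already recorded for knotted spheres; indeed Case II is a knotted sphere with $x_{4}=\lambda x_{3}$, so one may simply invoke (\ref{b8}) rather than re-deriving it.

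It then remains to substitute $W=\sqrt{EG-F^{2}}=\sqrt{1+\lambda^{2}}\,\lvert x_{3}(u)\rvert$ together with $G_{u}=2(1+\lambda^{2})x_{3}(u)x_{3}^{\prime}(u)$. A one-line computation gives $G_{u}/W=\pm 2\sqrt{1+\lambda^{2}}\,x_{3}^{\prime}(u)$, whence $(G_{u}/W)_{u}=\pm 2\sqrt{1+\lambda^{2}}\,x_{3}^{\prime\prime}(u)$, and the common factor $\sqrt{1+\lambda^{2}}$ cancels against the outer $1/(2W)$ to leave $K=-x_{3}^{\prime\prime}(u)/x_{3}(u)$. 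There is no genuine obstacle here: the only point deserving a moment's care is that the sign ambiguity in $W=\sqrt{1+\lambda^{2}}\,\lvert x_{3}\rvert$ is immaterial, since $W$ appears once inside $G_{u}/W$ and once as the outer factor $1/W$, so either sign choice cancels; the regularity hypothesis $W\neq 0$, i.e. $x_{3}\neq 0$, is exactly what makes the final quotient well defined.
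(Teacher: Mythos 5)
Your proposal is correct and follows the same route the paper indicates: substitute the Case II coefficients (\ref{b16*}) into the intrinsic formula (\ref{a2}) (equivalently, specialize the already-derived reduction (\ref{b8})), then compute $G_{u}/W$ and differentiate. The paper gives no further detail than this; your write-out of the cancellation, including the harmless sign/absolute-value issue in $W=\sqrt{1+\lambda^{2}}\,\lvert x_{3}\rvert$, is a faithful completion of that computation.
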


As a consequence of Proposition $4$ we obtain the following result.

\begin{corollary}
Let $M$ be a surface given with the position vector (\ref{b16}). Then we
have the following statements

$i)$ If $x_{3}(u)=$ $ae^{cu}+be^{-cu}$ then the corresponding surface is
pseudo-spherical, i.e., it has negative Gaussian curvature $K=-\frac{1}{c^{2}%
},$

$ii)$ If $x_{3}(u)=$ $a\cos cu+b\sin cu$ then the corresponding surface is
spherical, i.e., it has negative Gaussian curvature $K=\frac{1}{c^{2}},$
where $a$, $b$ and $c$ are real constants.

$iii)$ If $x_{3}(u)=$ $au+b$ then the corresponding surface is flat.
\end{corollary}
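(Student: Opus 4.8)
The plan is to substitute each prescribed profile function $x_3(u)$ directly into the curvature formula of Proposition 4, namely $K=-x_3''(u)/x_3(u)$, and simplify. The guiding observation is that the functions in parts (i) and (ii) are eigenfunctions of the operator $d^2/du^2$, so in each of those cases $x_3''$ is a constant multiple of $x_3$; the quotient $-x_3''/x_3$ then collapses to a constant independent of $u$, which is exactly what ``(pseudo-)spherical'' demands. Part (iii) is the degenerate eigenvalue, where $x_3$ is linear and $x_3''$ vanishes identically.

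First I would treat part (i). Differentiating $x_3(u)=ae^{cu}+be^{-cu}$ twice gives $x_3''(u)=c^2\bigl(ae^{cu}+be^{-cu}\bigr)=c^2 x_3(u)$, so that $K=-x_3''/x_3=-c^2$ at every point where $x_3(u)\neq 0$. Thus $K$ is a negative constant, and the surface is pseudo-spherical. (I note that the direct computation produces the constant $-c^2$ rather than the $-1/c^2$ printed in the statement, so either the normalization of $c$ or the displayed reciprocal would need to be reconciled; the qualitative conclusion---constant negative curvature---is unaffected.)

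Next, for part (ii) with $x_3(u)=a\cos cu+b\sin cu$, the same two differentiations yield $x_3''(u)=-c^2\bigl(a\cos cu+b\sin cu\bigr)=-c^2 x_3(u)$, whence $K=-x_3''/x_3=+c^2$, a positive constant wherever $x_3\neq 0$; here the conclusion is that the surface is spherical, and one should read the curvature as the positive constant $c^2$ (the word ``negative'' in the statement, paired with a positive value, appears to be a slip). Finally, in part (iii) the choice $x_3(u)=au+b$ gives $x_3''(u)=0$ immediately, so $K=0$ identically and the surface is flat, matching the definition of flatness recalled in Section 2.

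There is essentially no analytic obstacle here, since the entire argument is a substitution into a formula already established in Proposition 4. The only points requiring mild care are domain issues---the quotient $-x_3''/x_3$ is a priori defined only where $x_3(u)\neq 0$, and one should observe that the proportionality $x_3''=\pm c^2 x_3$ makes $K$ constant precisely on that open set---together with the bookkeeping of the constants and signs flagged above. Accordingly, the ``hard part'' is not the derivation but verifying that the advertised numerical values ($-1/c^2$ and $1/c^2$) are stated consistently with the formula of Proposition 4.
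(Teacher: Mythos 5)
Your proof is correct and is exactly the argument the paper intends (the paper gives no explicit proof, presenting the corollary as an immediate consequence of Proposition 3.4's formula $K=-x_{3}''/x_{3}$): differentiate twice and observe that $x_{3}''=\pm c^{2}x_{3}$ in cases (i)--(ii) and $x_{3}''=0$ in case (iii). You are also right to flag that the direct computation yields $K=-c^{2}$ and $K=+c^{2}$ rather than the printed $-1/c^{2}$ and $+1/c^{2}$ (these would require $e^{u/c}$, $\cos(u/c)$, etc.), and that ``negative'' in part (ii) should read ``positive''; these are errors in the paper's statement, not in your argument.
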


\section{\textbf{The Conjugate Nets and Laplace Transforms of Knotted Sphere}%
}

In the present section, we will give some basic relations of the conjugate
net of curves on a surface in $\mathbb{R}^{n}.$ A net of curves on a \
surface $M$ is called conjugate, if at every point the tangent directions of
the curves of the net separate harmonically the asymptotic directions.
Taking the net to be parametric net with parameters $u$ and $v$, the
classical notion of the conjugate net usually can be stated in \cite{LG} as
follows:

\begin{definition}
Let $M$ be a smooth surface given with the position vector $X:U\subset 
\mathbb{E}^{2}\rightarrow \mathbb{E}^{n}$, and $N_{1},...,$ $N_{n-2}$ normal
vector fields of $M$ in $\mathbb{E}^{n}.$ If $X_{uv}=\frac{\partial ^{2}X}{%
\partial u\partial v}$ satisfies 
\begin{equation}
\left \langle X_{uv},N_{\alpha }\right \rangle =0,1\leq \alpha \leq n-2,
\label{c1}
\end{equation}%
then $\left( u,v\right) $ is called conjugate coordinates of $X$ and the net
woven by coordinate curves is called the conjugate net. For convenience, we
denote the conjugate net by $\left( u,v\right) $. Here, $\left \langle
,\right \rangle $ denotes the inner product on $\mathbb{E}^{n}$ \cite{LG}$.$
\end{definition}

Equation (\ref{c1}) is equivalent to the condition that $X_{uv}$ lies in the
subspace spanned by $X_{u}$ and $X_{v}$; i.e.,%
\begin{equation}
X_{uv}=\Gamma _{12}^{1}X_{u}+\Gamma _{12}^{2}X_{v}.  \label{c2*}
\end{equation}

Now, for the surface with normal conjugate net, we have two transforms%
\begin{equation}
X_{1}=X-\frac{X_{v}}{\Gamma _{12}^{1}},\text{ }X_{-1}=X-\frac{X_{u}}{\Gamma
_{12}^{2}},  \label{c2**}
\end{equation}%
which are called the \textit{Laplace transforms} of surface $M$ \cite{KT}.

Furthermore, the functions%
\begin{equation}
h=\partial _{u}\Gamma _{12}^{1}-\Gamma _{12}^{1}\Gamma _{12}^{2},\text{ }%
k=\partial _{v}\Gamma _{12}^{2}-\Gamma _{12}^{1}\Gamma _{12}^{2}  \label{c2}
\end{equation}%
are called the \textit{Laplace invariants}.

If $\Gamma _{12}^{1}\neq 0$ $(resp.$ $\Gamma _{12}^{2}\neq 0),$ the
conjugate net is called $v-$\textit{direction normal} (resp. $u-$\textit{%
direction normal}). To establish geometrically the notion of conjugate net
in ambient space, the following result explain the real geometric meaning of
the conjugate net defined in (\ref{c1}).

\begin{proposition}
\cite{LG} $\left( u,v\right) $ is a $v$-direction normal conjugate net of $M$
if and only if there exists another surface $\widetilde{M}$ given with the
position vector $X_{1}(u,v)$ such that, for any $\left( u,v\right) \in
D\subset \mathbb{E}^{2},$ the straight line $XX_{1}$ joining the points $%
X(u,v)$ and $X_{1}(u,v)$ is parallel to the vectors $X_{v}(u,v)$ and $%
X_{1u}(u,v).$
\end{proposition}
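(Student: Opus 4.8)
The plan is to prove both implications of this biconditional by directly unpacking the definition of the first Laplace transform $X_{1}$ given in (\ref{c2**}). The whole statement reduces to a computation of the difference $X - X_{1}$ and its first partial derivatives, so the key is to track how the conjugate-net condition (\ref{c2*}) controls these derivatives.

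First I would establish the forward direction. Assume $(u,v)$ is a $v$-direction normal conjugate net, so by the remark preceding the proposition $\Gamma_{12}^{1} \neq 0$ and (\ref{c1})--(\ref{c2*}) hold. From (\ref{c2**}) we have $X - X_{1} = X_{v}/\Gamma_{12}^{1}$, which is manifestly a scalar multiple of $X_{v}(u,v)$; this immediately gives that the line $XX_{1}$ is parallel to $X_{v}$. The substantive step is to show this same line is parallel to $X_{1u}$. To do this I would differentiate $X_{1} = X - X_{v}/\Gamma_{12}^{1}$ with respect to $u$, obtaining
\begin{equation*}
X_{1u} = X_{u} - \frac{X_{uv}\,\Gamma_{12}^{1} - X_{v}\,\partial_{u}\Gamma_{12}^{1}}{\left(\Gamma_{12}^{1}\right)^{2}}.
\end{equation*}
Now I substitute the conjugate-net relation $X_{uv} = \Gamma_{12}^{1}X_{u} + \Gamma_{12}^{2}X_{v}$ from (\ref{c2*}). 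The $X_{u}$ terms cancel, and after collecting the remaining terms one finds that $X_{1u}$ is a scalar multiple of $X_{v}$ alone (the coefficient involving $\partial_{u}\Gamma_{12}^{1}$ and $\Gamma_{12}^{2}$ can be recognized using the Laplace invariant $h$ from (\ref{c2})). Since $X - X_{1}$ is also parallel to $X_{v}$, the two vectors $X_{v}$ and $X_{1u}$ are parallel to the joining line, which is exactly the asserted geometric condition.

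For the converse I would run the argument backwards: assume there exists a surface $\widetilde{M}$ with position vector $X_{1}(u,v)$ such that the line $XX_{1}$ is parallel to both $X_{v}$ and $X_{1u}$. Parallelism to $X_{v}$ forces $X - X_{1} = f(u,v)\,X_{v}$ for some scalar function $f$ (with $f \neq 0$ so that $X_{1}$ genuinely differs from $X$), which pins down the form $X_{1} = X - X_{v}/\Gamma_{12}^{1}$ upon writing $f = 1/\Gamma_{12}^{1}$. Differentiating in $u$ as above and imposing that $X_{1u}$ be parallel to $X_{v}$ (hence have no $X_{u}$-component) forces the $X_{u}$-coefficient in the expansion of $X_{uv}$ to vanish against $X_{1u}$, which yields precisely $X_{uv} = \Gamma_{12}^{1}X_{u} + \Gamma_{12}^{2}X_{v}$, i.e. the conjugate-net condition (\ref{c1}).

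The main obstacle I anticipate is purely bookkeeping rather than conceptual: one must be careful about what data is given versus derived. In particular, in the converse the coefficient function in $X - X_{1} = f X_{v}$ is a priori arbitrary, and one must argue that the requirement ``$X_{1u} \parallel X_{v}$'' is exactly strong enough to eliminate the $X_{u}$-component of $X_{uv}$ without over-constraining $f$. Keeping straight which parallelism condition produces which algebraic consequence — and verifying that the resulting coefficient matches $\Gamma_{12}^{1}$ consistently — is the delicate point; the differentiation and cancellation of the $X_{u}$ terms is otherwise routine.
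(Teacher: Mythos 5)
The paper offers no proof of this proposition at all: it is quoted verbatim from the reference [LG] (Li--Guo), so there is nothing internal to compare your argument against, and I am judging it on its own. Your forward direction is correct and is the standard computation: with $X_{1}=X-X_{v}/\Gamma_{12}^{1}$ one gets $X-X_{1}$ parallel to $X_{v}$ trivially, and substituting $X_{uv}=\Gamma_{12}^{1}X_{u}+\Gamma_{12}^{2}X_{v}$ into $X_{1u}$ cancels the $X_{u}$ terms and leaves $X_{1u}=\bigl(\partial_{u}\Gamma_{12}^{1}-\Gamma_{12}^{1}\Gamma_{12}^{2}\bigr)\,(\Gamma_{12}^{1})^{-2}\,X_{v}$, i.e.\ $h/(\Gamma_{12}^{1})^{2}$ times $X_{v}$ with $h$ the Laplace invariant of (\ref{c2}). (You should add the caveat that this requires $h\neq 0$ for $X_{1u}$ to be a genuine nonzero direction, and note the notational clash between the Laplace invariant $h$ and the second fundamental form $h$ of the paper.)

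The converse, as written, has a genuine gap. Writing $X_{1}=X-fX_{v}$ and differentiating gives
\begin{equation*}
X_{1u}=\bigl(1-f\,\Gamma_{12}^{1}\bigr)X_{u}-\bigl(f_{u}+f\,\Gamma_{12}^{2}\bigr)X_{v}-f\,h(X_{u},X_{v}),
\end{equation*}
where you must use the full ambient decomposition $X_{uv}=\Gamma_{12}^{1}X_{u}+\Gamma_{12}^{2}X_{v}+h(X_{u},X_{v})$ with $h(X_{u},X_{v})$ normal to $M$ --- you cannot assume the purely tangential form (\ref{c2*}), since that is exactly what is to be proved. Requiring $X_{1u}\parallel X_{v}$ then forces \emph{two} separate conditions: the vanishing of the $X_{u}$-component, which only pins down $f=1/\Gamma_{12}^{1}$ (and $\Gamma_{12}^{1}\neq 0$), and the vanishing of the normal component, which gives $f\,h(X_{u},X_{v})=0$ and hence, since $f\neq 0$, the conjugate-net condition (\ref{c1}). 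Your write-up attributes the conjugate-net condition to the vanishing of the $X_{u}$-component ("forces the $X_{u}$-coefficient \dots to vanish \dots which yields precisely $X_{uv}=\Gamma_{12}^{1}X_{u}+\Gamma_{12}^{2}X_{v}$"), but that condition alone yields nothing about the normal part of $X_{uv}$; the conclusion does not follow from the step you cite. The fix is easy --- invoke the normal component explicitly --- but as stated the converse does not close.
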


We obtain the following results,

\begin{theorem}
Let $M$ be a surface given with the position vector (\ref{b2}). If $\left(
u,v\right) $ are conjugate coordinates, then $M$ is a flat surface.
\end{theorem}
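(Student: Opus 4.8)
The plan is to convert the conjugate-net hypothesis into an algebraic relation among the coordinate functions and then substitute it into the Gaussian-curvature formula (\ref{b8}). By the definition of a conjugate net, (\ref{c1}) and (\ref{c2*}), the coordinates $(u,v)$ are conjugate precisely when the mixed partial $X_{uv}$ is tangential, i.e. $X_{uv}=\Gamma_{12}^{1}X_{u}+\Gamma_{12}^{2}X_{v}$, which by (\ref{b7}) is the same as $h(X_{u},X_{v})=0$. So the first step is to write this vanishing condition out componentwise using the explicit forms in (\ref{b3}) and (\ref{b6}).

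The decisive observation comes from the first two coordinates. From (\ref{b6}) the vector $X_{uv}$ has vanishing first and second components, and from (\ref{b3}) so does $X_{v}$, whereas $X_{u}$ carries the entries $(x_{1}^{\prime },x_{2}^{\prime })$. Projecting $X_{uv}=\Gamma_{12}^{1}X_{u}+\Gamma_{12}^{2}X_{v}$ onto these two coordinates therefore gives $\Gamma_{12}^{1}(x_{1}^{\prime },x_{2}^{\prime })=(0,0)$. In the nondegenerate ($v$-direction normal) case $\Gamma_{12}^{1}=-FG_{u}/2W^{2}\neq 0$, this forces $x_{1}^{\prime }=x_{2}^{\prime }=0$; since $\gamma$ is unit speed and $E=(x_{1}^{\prime })^{2}+(x_{2}^{\prime })^{2}+(x_{3}^{\prime })^{2}+(x_{4}^{\prime })^{2}=1$ by (\ref{b4}), we obtain $(x_{3}^{\prime })^{2}+(x_{4}^{\prime })^{2}=1$.

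The second step is a short Lagrange-identity computation. Writing $P=x_{3}x_{3}^{\prime }+x_{4}x_{4}^{\prime }$, so that $G_{u}=2P$, the identity $(x_{3}^{2}+x_{4}^{2})((x_{3}^{\prime })^{2}+(x_{4}^{\prime })^{2})=(x_{3}x_{3}^{\prime }+x_{4}x_{4}^{\prime })^{2}+(x_{3}x_{4}^{\prime }-x_{3}^{\prime }x_{4})^{2}$ together with $(x_{3}^{\prime })^{2}+(x_{4}^{\prime })^{2}=1$ yields $G=P^{2}+F^{2}$. Hence $W^{2}=EG-F^{2}=G-F^{2}=P^{2}$, so $W=|P|=|G_{u}|/2$ and $G_{u}/W=\pm 2$ is constant in $u$. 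Substituting into (\ref{b8}) then gives $K=-\frac{1}{2W}(G_{u}/W)_{u}=0$, i.e. $M$ is flat.

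The step I expect to be the main obstacle is the case analysis concealed in $\Gamma_{12}^{1}(x_{1}^{\prime },x_{2}^{\prime })=(0,0)$. When $G_{u}=0$ flatness is immediate from (\ref{b8}), and the branch $x_{1}^{\prime }=x_{2}^{\prime }=0$ is exactly the one handled above. The genuinely delicate branch is $F\equiv 0$ with $G_{u}\neq 0$: there the profile curve $(x_{3},x_{4})$ degenerates to a radial curve in its plane, $(u,v)$ is still conjugate, yet $W=\sqrt{G}$ and $K=-(\sqrt{G})^{\prime \prime }/\sqrt{G}$ need not vanish. Pinning down the hypotheses so as to rule out (or absorb) this degenerate branch — most naturally by requiring the net to be $v$-direction normal in the sense of the paragraph preceding the theorem — is the crux, after which the Lagrange-identity computation closes the argument.
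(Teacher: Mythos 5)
Your argument is correct where it applies, and its skeleton coincides with the paper's: both proofs reduce conjugacy to $h(X_{u},X_{v})=0$ via (\ref{b7}), read off the first two components of the resulting identity (\ref{c3}) to obtain $x_{1}^{\prime }=x_{2}^{\prime }=0$, deduce $\left( x_{3}^{\prime }\right) ^{2}+\left( x_{4}^{\prime }\right) ^{2}=1$ from the unit-speed assumption, and conclude $G_{u}^{2}=4W^{2}$, whence $G_{u}/W=\pm 2$ is constant and $K=0$ by (\ref{b8}). The one computational difference is how $G_{u}^{2}=4W^{2}$ is reached: the paper also extracts the third and fourth components of (\ref{c3}) and combines them into $\left( x_{3}^{\prime }\right) ^{2}+\left( x_{4}^{\prime }\right) ^{2}=\frac{G_{u}}{2W^{2}}\left( x_{3}x_{3}^{\prime }+x_{4}x_{4}^{\prime }\right) $, i.e.\ (\ref{c5}), whereas you bypass those components entirely with the Lagrange identity $G\cdot 1=\left( \tfrac{G_{u}}{2}\right) ^{2}+F^{2}$, which together with $E=1$ gives $W^{2}=G-F^{2}=G_{u}^{2}/4$ directly from (\ref{b4}). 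Your route is slightly more economical and makes transparent that, once $x_{1}^{\prime }=x_{2}^{\prime }=0$ is known, flatness needs nothing further from the conjugacy condition.

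The degenerate branch you flag is not a pedantic worry; it is a genuine gap in the paper's proof and, in fact, in the statement of the theorem. The first two components of (\ref{c3}) actually read $\frac{FG_{u}}{2W^{2}}x_{i}^{\prime }=0$ for $i=1,2$; the paper's (\ref{c4}) records these as $x_{1}^{\prime }=0$, $x_{2}^{\prime }=0$, silently dividing by $\Gamma _{12}^{1}=-\frac{FG_{u}}{2W^{2}}$. When $F\equiv 0$ this division is illegitimate, and the case genuinely occurs with conjugate coordinates and nonvanishing curvature: for the paper's own Case II surfaces (\ref{b16}) one has $F=0$ and $X_{uv}=\frac{x_{3}^{\prime }}{x_{3}}X_{v}=\Gamma _{12}^{2}X_{v}$, so $\left( u,v\right) $ is conjugate, yet $K=-x_{3}^{\prime \prime }/x_{3}$ is nonzero for, e.g., $x_{3}=a\cos cu+b\sin cu$, as the paper itself notes in the corollary following (\ref{b16}). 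So the theorem holds only under the additional hypothesis you identify --- that the net be $v$-direction normal, $\Gamma _{12}^{1}\neq 0$ (equivalently $FG_{u}\neq 0$) --- and with that hypothesis added your proof is complete.
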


\begin{proof}
Let $\left( u,v\right) $ be conjugate coordinates of the knotted sphere $M$
given with the parametrization (\ref{b2}). Then, by definition $%
h(X_{u},X_{v})=0.$ So, by the use of (\ref{b7}) we have 
\begin{equation}
X_{uv}=\frac{G_{u}}{2W^{2}}X_{v}-\frac{FG_{u}}{2W^{2}}X_{u}.  \label{c3}
\end{equation}%
Consequently, substituting (\ref{b3}) with (\ref{b6}) into (\ref{c3}) we get 
\begin{eqnarray}
x_{1}^{\prime }(u) &=&0,  \notag \\
x_{2}^{\prime }(u) &=&0,  \notag \\
x_{3}^{\prime }(u) &=&x_{3}(u)\frac{G_{u}}{2W^{2}}-x_{4}^{\prime }(u)\frac{%
FG_{u}}{2W^{2}}  \label{c4} \\
x_{4}^{\prime }(u) &=&x_{4}(u)\frac{G_{u}}{2W^{2}}+x_{3}^{\prime }(u)\frac{%
FG_{u}}{2W^{2}}.  \notag
\end{eqnarray}%
Summing up the last two equations of (\ref{c4}) we obtain 
\begin{equation}
\left( x_{3}^{\prime }(u)\right) ^{2}+\left( x_{4}^{\prime }(u)\right) ^{2}=%
\frac{G_{u}}{2W^{2}}\left( x_{3}(u)x_{3}^{\prime }(u)+x_{4}(u)x_{4}^{\prime
}(u)\right) .  \label{c5}
\end{equation}%
Moreover, the profile curve $\gamma $ has arclength parameter and the first
two equations of (\ref{c4}) imply that 
\begin{equation}
\left( x_{3}^{\prime }(u)\right) ^{2}+\left( x_{4}^{\prime }(u)\right)
^{2}=1.  \label{c6}
\end{equation}%
Hence, by the use of (\ref{b4}) \ with (\ref{c6}) the equation (\ref{c5})
reduces to 
\begin{equation}
1=\frac{G_{u}^{2}}{4W^{2}},\text{ }W>0.  \label{c7}
\end{equation}%
Thus, substituting (\ref{c7}) into (\ref{b8}) we get $K=-\frac{1}{2W}\left( 
\frac{G_{u}}{W}\right) _{u}=0.$This completes the proof of the proposition.
\end{proof}

By the virtue of (\ref{b8}) the following results hold.

\begin{corollary}
The coordinates $(u,v)$ of the surface $M$ given with the position vector (%
\ref{b10*}) can not be conjugate.
\end{corollary}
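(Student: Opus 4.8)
The plan is to argue by contradiction, using the sharp intermediate condition extracted in the proof of the preceding Theorem rather than its final flatness conclusion. The crucial preliminary observation is that for the surface (\ref{b10*}) the coefficient $G$ of the first fundamental form is constant: by (\ref{b11}) one has $G=x_{3}^{2}+x_{4}^{2}=\cos ^{2}\varphi (u)+\sin ^{2}\varphi (u)=1$, so that $G_{u}=0$ identically. (Via (\ref{b8}) this already forces $K=0$, in agreement with Proposition $1$.)

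First I would suppose, toward a contradiction, that $(u,v)$ are conjugate coordinates for $M$. Then $h(X_{u},X_{v})=0$, and the computation carried out in the proof of the preceding Theorem applies verbatim: passing through (\ref{c3})--(\ref{c6}) one is driven to the necessary relation (\ref{c7}),
\begin{equation*}
1=\frac{G_{u}^{2}}{4W^{2}},\qquad W>0 .
\end{equation*}
In other words, conjugacy of $(u,v)$ is possible only if $G_{u}$ never vanishes, with $G_{u}/W=\pm 2$.

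Substituting the identity $G_{u}=0$ obtained above into (\ref{c7}) yields $1=0$, which is impossible. Hence no admissible profile curve $\gamma$ can render $(u,v)$ conjugate, and the corollary follows.

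The only point I expect to require care is an apparent tension with the preceding Theorem: that Theorem shows conjugacy would imply flatness, and the surface (\ref{b10*}) is already flat by Proposition $1$, so flatness by itself cannot preclude conjugacy. The genuine obstruction is the stronger quantitative condition (\ref{c7}), which demands $G_{u}/W=\pm 2\neq 0$; this is irreconcilable with $G\equiv 1$, and that incompatibility is exactly what produces the contradiction. Beyond recording $G_{u}=0$ and invoking (\ref{c7}), no further computation is needed.
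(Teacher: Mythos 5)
Your argument is correct and is essentially the paper's own proof: both assume conjugacy, invoke the necessary condition (\ref{c7}) from the preceding theorem, and derive a contradiction from $G\equiv 1$ (hence $G_{u}=0$) via (\ref{b11}) — the paper writes the contradiction as $4W^{2}=G_{u}^{2}=0$ versus $W>0$, while you write it as $1=0$, which is the same substance. Your closing observation that flatness alone cannot be the obstruction is a sound reading of the paper's subsequent Remark.
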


\begin{proof}
Suppose that $\left( u,v\right) $ are the conjugate coordinates of \ the
surface given with the parametrization (\ref{b10*}). Then, from (\ref{b11})
and (\ref{c7}) we get $4W^{2}=G_{u}^{2}=0.$ But, this contradicts the fact
that $W>0.$ So, the coordinates $(u,v)$ can not be conjugate.
\end{proof}

\begin{remark}
Corollary 7 shows that the inverse statement of Theorem $7$ may not be true.
\end{remark}

\begin{proposition}
Let $M$ be a knotted sphere\ given with the position vector (\ref{b16}).Then
the conjugate surface $\widetilde{M}$ is a part of the rotation plane $\Pi .$
\end{proposition}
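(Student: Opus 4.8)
The plan is to compute the relevant Laplace transform explicitly from the parametrization (\ref{b16}) and to verify that its last two coordinates vanish identically, so that the transformed surface lies in $\Pi =\{x_{3}=0,\,x_{4}=0\}$.

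First I would record the Christoffel data for Case II. Since $F=0$ by (\ref{b16*}), formula (\ref{b5}) gives $\Gamma _{12}^{1}=-\frac{FG_{u}}{2W^{2}}=0$, while $\Gamma _{12}^{2}=\frac{G_{u}}{2W^{2}}$. Using $W^{2}=EG-F^{2}=G=(1+\lambda ^{2})x_{3}^{2}$ together with $G_{u}=2(1+\lambda ^{2})x_{3}x_{3}^{\prime }$, this reduces to $\Gamma _{12}^{2}=\frac{x_{3}^{\prime }}{x_{3}}$. A quick check that $X_{uv}=\frac{x_{3}^{\prime }}{x_{3}}X_{v}=\Gamma _{12}^{2}X_{v}$ (obtained by substituting $x_{4}=\lambda x_{3}$ into (\ref{b3}) and (\ref{b6})) confirms, via (\ref{c2*}), that $(u,v)$ is indeed a conjugate net.

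The crucial observation is that, because $\Gamma _{12}^{1}=0$, the $v$-direction transform $X_{1}=X-\frac{X_{v}}{\Gamma _{12}^{1}}$ in (\ref{c2**}) degenerates; hence the conjugate net is $u$-direction normal and the conjugate surface $\widetilde{M}$ must be taken to be
\begin{equation*}
X_{-1}=X-\frac{X_{u}}{\Gamma _{12}^{2}}=X-\frac{x_{3}}{x_{3}^{\prime }}X_{u}.
\end{equation*}

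Finally I would substitute. Writing out $X$ from (\ref{b16}) and $X_{u}$ from (\ref{b3}) with $x_{4}=\lambda x_{3}$, the last two components of $X_{u}$ equal $\frac{x_{3}^{\prime }}{x_{3}}$ times the corresponding components of $X$; thus multiplying $X_{u}$ by $\frac{x_{3}}{x_{3}^{\prime }}$ and subtracting cancels the third and fourth coordinates exactly, leaving
\begin{equation*}
X_{-1}=\left( x_{1}-\frac{x_{3}x_{1}^{\prime }}{x_{3}^{\prime }},\ x_{2}-\frac{x_{3}x_{2}^{\prime }}{x_{3}^{\prime }},\ 0,\ 0\right) .
\end{equation*}
Since $\Pi$ is precisely the plane $\{x_{3}=0,\,x_{4}=0\}$, this exhibits $\widetilde{M}\subset \Pi$, proving the claim. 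The only genuine obstacle is the conceptual one in the preceding step—recognizing that the vanishing of $\Gamma _{12}^{1}$ forces the use of the $u$-direction transform $X_{-1}$ in place of $X_{1}$; once that is settled, the conclusion is a one-line cancellation in the last two coordinates.
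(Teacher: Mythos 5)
Your proposal is correct and follows essentially the same route as the paper's proof: both compute $\Gamma_{12}^{1}=0$ and $\Gamma_{12}^{2}=x_{3}^{\prime}/x_{3}$ from (\ref{b5}) and (\ref{b16*}), pass to the $u$-direction Laplace transform $X_{-1}=X-\frac{x_{3}}{x_{3}^{\prime}}X_{u}$, and observe that the last two coordinates cancel, placing $\widetilde{M}$ in $\Pi$. Your explicit verification that $X_{uv}=\Gamma_{12}^{2}X_{v}$ (so the net really is conjugate) is a small welcome addition that the paper leaves as an assumption, but it does not change the argument.
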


\begin{proof}
Let $M$ be a knotted sphere\ given with the parametrization (\ref{b2}), then
by the use of (\ref{b5}) we get \ 
\begin{equation*}
\Gamma _{12}^{1}=-\frac{FG_{u}}{2W^{2}},\Gamma _{12}^{2}=\frac{G_{u}}{2W^{2}}%
.
\end{equation*}%
Now, assume that the surface $M$ with normal conjugate net, then equation (%
\ref{b16*}) yields $\Gamma _{12}^{1}=0$ and $\Gamma _{12}^{2}=\frac{%
x_{3}^{\prime }(s)}{x_{3}(s)}.$ Consequently, the Laplace transform $X_{-1}$
becomes%
\begin{equation*}
X_{-1}=X-\frac{x_{3}(u)}{x_{3}^{\prime }(u)}X_{u}.
\end{equation*}%
Hence, using (\ref{b16}) with its partial derivative $X_{u}$ we obtain 
\begin{equation}
X_{-1}=\left( x_{1}(u)-\frac{x_{3}(u)}{x_{3}^{\prime }(u)}x_{1}^{\prime
}(u),x_{2}(u)-\frac{x_{3}(u)}{x_{3}^{\prime }(u)}x_{2}^{\prime
}(u),0,0\right) .
\end{equation}%
This completes the proof of the proposition.
\end{proof}

\bigskip 
\begin{tabular}{l}
Kadri Arslan \\ 
Department of Mathematics \\ 
Uluda\u{g} University \\ 
16059 Bursa, TURKEY \\ 
E-mail: arslan@uludag.edu.tr%
\end{tabular}


\begin{thebibliography}{99}
\bibitem{Am} \emph{Yu. A. Aminov}, Geometry of Submanifolds. Gordon \&
Breach Science Publ., Amsterdam, $2001$.

\bibitem{Ar} \emph{E. Artin,} Zur izotopi zweidimensionaler Flachen im R4. 
\emph{Abh. Math Sem. Univ. Hamburg}, $\mathbf{4}(1925),174-177.$

\bibitem{ABCO} \emph{K. Arslan, B. Bayram, B. Bulca and G. \"{O}zt\"{u}rk,}
General rotation surfaces in $E^{4}$, \emph{Results. Math.}, $\mathbf{61}%
(2012),315-327.$

\bibitem{BABO1} \emph{B. Bulca, K. Arslan, B.K. Bayram and G. \"{O}zt\"{u}rk,%
} Spherical product surfaces in $E^{4}$. \emph{An. St. Univ. Ovidius
Constanta}, $\mathbf{20}(2012)$, $41-54$.

\bibitem{Ch} \emph{B.Y., Chen,} Geometry of Submanifolds, Dekker, New York, $%
1973$.

\bibitem{Cu} \emph{D.V. Cuong,} Surfaces of Revolution with Constant
Gaussian Curvature in Four- Space, arXiv:1205.2143v3.

\bibitem{DT} \emph{U. Dursun and N.C. Turgay,} General rotational surfaces
in Euclidean space $E^{4}$ with pointwise 1-type Gauss map . \emph{Math.
Commun.}, $\mathbf{17}(2012),71-81.$

\bibitem{GM1} \emph{G. Ganchev and V. Milousheva,} On the Theory of Surfaces
in the Four-dimensional Euclidean Space. \emph{Kodai Math. J.} $\mathbf{31}%
(2008),183-198$.

\bibitem{G} \emph{Gray, A.} Modern Differential Geometry of Curves and
Surfaces. CRC Press, Boca Raton Ann Arbor London Tokyo, $1993$.

\bibitem{KT} \emph{N. Kamran and K. Tenenblat,} Laplace transformation in
higher dimensions, \emph{Duke Math. J.} $\mathbf{80}(1996),237-266$.

\bibitem{LG} \emph{H. Li and Z. Guo, }The Conjugate Nets, Cartan
Submanifolds, and Laplace Transformations in Space Forms1\emph{, J. Math.
Anal. Appl.} $\mathbf{267}(2002),726-745$.

\bibitem{Mo} \emph{C. Moore,} Surfaces of Rotation in a Space of Four
Dimension. \emph{Ann. Math.}, $\mathbf{21}(1919);81-93.$

\bibitem{Wo} \emph{Y.C. Wong,} Contributions to the theory of surfaces in
4-space of constant curvature, \emph{Trans. Amer. Math. Soc.,} $\mathbf{59}%
(1946),467-507$.
\end{thebibliography}
\end{document}